\newcommand{\ad}{A^{\nabla}}
\newcommand{\R}{\mbox {$\mathbb R $}}
\newtheorem{theorem}{Theorem}[section]
\newtheorem{lemma}[theorem]{Lemma}
\newtheorem{proposition}[theorem]{Proposition}
\newtheorem{corollary}[theorem]{Corollary}
\theoremstyle{definition}
\newtheorem{definition}[theorem]{Definition}
\newtheorem{example}[theorem]{\sc Example}
\theoremstyle{remark}
\begin{document}
  \vspace{10mm}

\begin{abstract}
In this note, we consider a triple construction $(\ad;\star,\epsilon(0))$ on a $d$-algebra $(A;\ast,0)$  and investigate some of  their properties. Applying this construction to a $d$-transitive $d$-algebra, we show that $(\ad; <)$ is a poset, which induces a $BCK$-algebra.
\end{abstract}

\title[A triple construction on $d$-algebras]{A triple construction on $d$-algebras }
\bigskip
\author{Hiba F. Fayoumi and  Akbar Rezaei}
\maketitle
\bigskip

\renewcommand{\thefootnote}{}

 \footnotetext{{\it 2010 Mathematics Subject Classification.}
 20N02.}
 \footnotetext{{\it Key words and phrases.} $BCK$-algebra, normalizer, $d$-algebra, $d$-triple,  $d$-transitive. }

\section{Introduction}

Y. Imai and K. Is\'eki introduced two significant classes of abstract algebras: $BCK$-algebras and $BCI$-algebras \cite{Is,IsTa}. $BCK$-algebras have notable connections with various fields. For instance, D. Mundici \cite{Mu} proved that $MV$-algebras are categorically equivalent to bounded commutative $BCK$-algebras, while J. Meng \cite{Me} established that implicative commutative semigroups are equivalent to a class of $BCK$-algebras. Joseph Neggers and Hee Sik Kim later introduced the concept of $d$-algebras \cite{NK}, a useful generalization of $BCK$-algebras \cite{Is}, and explored numerous relationships between $d$-algebras and $BCK$-algebras, showing that $d$-algebras are among the least associative algebras.\\
Young Chan Lee and Hee Sik Kim further estimated the number of $d^*$- subalgebras of order $n$ in a $d$-transitive $d^*$-algebra \cite{LK}. Paul J. Allen \cite{A} expanded this area by constructing a large class of $d$-algebras through the use of constructive function triples on real numbers and integral domains. Sun Shin Ahn and Young Hee Kim \cite{AK} developed implicative and commutative $d$-algebras that are not $BCK$-algebras, proving that these are indeed generalizations of $BCK$-algebra concepts. Paul J. Allen et al. \cite{AKN} introduced a method for constructing $d$-algebras via deformation functions, which are distinct from $BCK$-algebras. Keum Sook So and Young Hee Kim \cite{SK} explored the conditions under which mirror algebras become $d$-algebras (or $d^*$-algebras).\\
The motivation behind this study lies in extending the properties of $d$-algebras as logical algebras. We examine a triple $(\ad;\star,\epsilon(0))$ on a $d$-algebra $(A;\ast,0)$ and investigate their related properties. Finally, we show that for any $d$-transitive $d$-algebra, ($A$ forms a poset, thus inducing a $BCK$-algebra).\\

\section{Preliminaries}
A $d$-{\it algebra} (\cite{NK}) is a non-empty set $X$ with a constant
0 and a binary operation ``$\ast$" satisfying the following
axioms:
\begin{itemize}
\item[(I)] $x*x = 0$,
\item[(II)] $0*x = 0$,
\item[(III)] $x*y =0$ and $y*x= 0$ imply $x=y$
\end{itemize}
for all $x, y$ in $X$.
\\
A $d$-algebra $X$ is said to be a $d^{\ast}$-algebra (\cite{LK}) if it satisfies the following axiom: for all $x,y\in X$
\begin{itemize}
\item[(IV)] $(x\ast y)\ast x=0$.
\end{itemize}
A $BCK$-algebra is a $d$-algebra $(X;*,0)$ satisfying the
following additional axioms:
\begin{itemize}
\item[(V)]  $((x*y)*(x*z))*(z*y)=0$,
\item[(VI)] $(x*(x*y))*y=0$
\end{itemize}
for all $x, y, z$ in $X$.
\par

In $X$, we can define a binary relation ``$\leq$"  by $x\leq y$ if and only if $x\ast y=0$. Note that if $X$ is a $d$-algebra with $x\ast 0=0$ for any $x\in X$, then $x=0.$
\\
 It is known that if $(X;*,0)$ is a $BCK/BCI$-algebra then
\begin{itemize}
 \item[(VII)] $(x*y)*(z*y)\leq x*z$,
 \item[(VIII)] $x*(x*y)\leq y$,
 \item[(IX)] $x*0 = x$,
\item[(X)] $x*y\leq x$
 \end{itemize}
 for any $x,y,z\in X$.

Clearly, a $BCK$-algebra is a $d^*$-algebra, but the converse need not be true.
\begin{example} \cite{NK} \label{1.1}
   Let $\R$ be the set of all real
numbers and define $x*y := x\cdot(x-y)$, $x,y\in \R$, where
$``\cdot"$ and $``-"$ are ordinary product and subtraction of
real numbers. Then $x*x = 0, 0*x = 0, x*0 = x^2$. If $x*y = y*x =
0$, then $x(x-y) = 0$ and $x^2 = xy$, $y(y-x) = 0$, $y^2 = xy$.
Thus if $x = 0$, $y^2 = 0$, $y=0$; if $y=0$, $x^2 = 0$, $x = 0$
and if $xy\not = 0$, then $x=y$. Hence $(\R;*,0)$ is a $d$-algebra,
but not a $d^*$-algebra, since $(2*0)*2 \not = 0$.
\end{example}

\begin{definition}\label{1.2}
{\rm \cite{NK}} {\rm Let $(X;*,0)$ be a
$d$-algebra and $x\in X$. Define $x\ast X:=\{x*a\vert a\in X\}$.
$X$ is said to be {\it edge} if for any $x$ in $X$, $x*X= \{x,
0\}$.}
\end{definition}

 If $(X,\leq)$ is an ordered set (poset), then the operation $*$
on $X$ given by $x*y = 0$ iff $x\leq y$ and $x*y = x$ otherwise
defines a $BCK$-algebra. On the other hand, from our viewpoint it
has the ``edge" property. Although edge $d$-algebras are not
$BCK$-algebras in general, they come close to being so as we note
below.

\begin{lemma}\label{1.3}
{\rm \cite{NK}} Let $(X;*,0)$ be an edge
$d$-algebra. Then $x*0= x$ for any $x\in X$.
\end{lemma}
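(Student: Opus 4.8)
The plan is to exploit the edge hypothesis to cut the value of $x\ast 0$ down to only two candidates, and then eliminate (or harmlessly absorb) the unwanted one using axioms (II) and (III). First I would fix an arbitrary $x\in X$ and note that, since $0\in X$, the element $x\ast 0$ lies in $x\ast X$. By the edge property $x\ast X=\{x,0\}$, so $x\ast 0\in\{x,0\}$; that is, either $x\ast 0=x$ or $x\ast 0=0$. This reduces the whole statement to a two-case analysis.

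In the case $x\ast 0=x$ there is nothing to prove. The only real work is the case $x\ast 0=0$. Here I would combine this equation with axiom (II), which gives $0\ast x=0$ for free, and then invoke the antisymmetry-type axiom (III): from $x\ast 0=0$ and $0\ast x=0$ we obtain $x=0$. Once $x=0$, axiom (I) yields $x\ast 0=0\ast 0=0=x$, so the desired identity $x\ast 0=x$ holds in this case as well.

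Since both cases deliver $x\ast 0=x$ and $x$ was arbitrary, the lemma follows. I do not anticipate any genuine obstacle: the single subtlety is recognizing that the apparently bad case $x\ast 0=0$ is self-defeating, forcing $x$ itself to equal $0$ through (III), after which the claimed identity is immediate. This is precisely the content of the remark in the preliminaries that $x\ast 0=0$ implies $x=0$, so that observation is the one ingredient I would lean on.
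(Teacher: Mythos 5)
Your proof is correct and is exactly the standard argument: the edge property pins $x*0$ to $\{x,0\}$, and the case $x*0=0$ collapses to $x=0$ via (II) and (III), which is precisely the observation recorded in the paper's preliminaries. The paper itself only cites this lemma from Neggers--Kim without reproducing a proof, so there is nothing further to compare; your argument fills that in correctly.
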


\begin{proposition}\label{1.4}
{\rm \cite{NK}} If $(X;*,0)$ is an edge
$d$-algebra, then the condition {\rm(V)} holds.
\end{proposition}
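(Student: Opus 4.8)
The plan is to verify (V) directly by exploiting the defining feature of an edge algebra: for every pair $u,v\in X$ one has $u*v\in\{u,0\}$, together with the identity $x*0=x$ supplied by Lemma~\ref{1.3}. First I would analyze the inner expression $(x*y)*(x*z)$. Since $x*y\in\{x,0\}$ and $x*z\in\{x,0\}$, only three possibilities arise. If $x*y=0$, then axiom (II) gives $(x*y)*(x*z)=0*(x*z)=0$, and the whole left-hand side of (V) becomes $0*(z*y)=0$ by (II) again. If $x*y=x$ but $x*z=x$, then $(x*y)*(x*z)=x*x=0$ by axiom (I), and once more the left-hand side collapses to $0*(z*y)=0$. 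Thus whenever the inner term vanishes, (V) holds immediately.

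The only surviving case is $x*y=x$ together with $x*z=0$. Here Lemma~\ref{1.3} gives $(x*y)*(x*z)=x*0=x$, so the left-hand side of (V) reduces to $x*(z*y)$ and the goal becomes to show $x*(z*y)=0$. Applying the edge property once more to $z*y\in\{z,0\}$ splits this into two sub-cases. If $z*y=z$, then $x*(z*y)=x*z=0$ by hypothesis, and we are finished.

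The remaining sub-case, in which $x*y=x$, $x*z=0$ and $z*y=0$ simultaneously, is the crux and the step I expect to be the main obstacle. There $x*(z*y)=x*0=x$ by Lemma~\ref{1.3}, which is nonzero unless $x=0$; so to close the argument one must show that this configuration cannot occur. Rewritten through the relation $\le$, the hypotheses say $x\le z$ and $z\le y$ while $x\not\le y$, so what is really needed is a transitivity property of $\le$. Antisymmetry (axiom (III)) only handles the degenerate situation $z*x=0$, which would force $x=z$ and hence $x*y=0$, contradicting $x*y=x$; the general sub-case is not settled by the bare edge axioms and appears to demand an extra ingredient — exactly the kind of $d$-transitivity isolated later in the paper. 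I would therefore expect the proof either to feed in such a transitivity hypothesis at this point, or to produce an argument special to edge algebras forcing $z*y=z$ whenever $x*y=x$ and $x*z=0$; pinning down that step is the heart of the matter.
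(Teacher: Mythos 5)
Your case analysis is the right approach, and the obstruction you isolate in the last sub-case is not a defect of your argument but a genuine failure of the statement as printed. The configuration $x*y=x$, $x*z=0$, $z*y=0$ with $x\neq 0$ really does occur in an edge $d$-algebra: take $X=\{0,a,b,c\}$ with $u*u=0$ for all $u$, $0*u=0$ for all $u$, $u*0=u$ for $u\neq 0$, and $a*b=a$, $a*c=0$, $b*a=b$, $b*c=b$, $c*a=c$, $c*b=0$. Conditions (I) and (II) are immediate, (III) holds because whenever $u*v=0$ with $u\neq v$ the reverse product is nonzero ($c*a=c$, $b*c=b$, $u*0=u$), and each row takes only the values $0$ and its label, so the algebra is edge. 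Yet
\[
((a*b)*(a*c))*(c*b)=(a*0)*0=a*0=a\neq 0,
\]
so condition (V) fails. As you predicted, the missing ingredient is exactly transitivity of $\leq$: no argument special to edge algebras can force $z*y=z$ in your final sub-case.

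What has happened is a renumbering slip rather than a mathematical one. In the source \cite{NK} the two $BCK$-axioms are labelled (IV) and (V), and the proposition proved there for edge $d$-algebras is their (V), which is this paper's (VI): $(x*(x*y))*y=0$. That statement does follow from the edge property alone, by precisely the two-way split you used in your first two cases: if $x*y=0$ then $(x*(x*y))*y=(x*0)*y=x*y=0$ using Lemma \ref{1.3}, and if $x*y=x$ then $(x*(x*y))*y=(x*x)*y=0*y=0$ by (I) and (II). There is also internal evidence for this reading: if (V) came free from the edge property, then, since (VI) does too, Theorem \ref{1.6} would not need $d$-transitivity at all, contradicting the paper's own remark that both hypotheses are necessary. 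So your diagnosis is correct; the statement should read ``condition (VI)'', and (V) for edge $d$-algebras is recovered only under the additional $d$-transitivity hypothesis, as in Theorem \ref{1.6}.
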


\begin{definition}\label{1.5}
{\rm A $d$-algebra $(X;*,0)$ is said to
be $d$-{\it transitive} if $x*z = 0$ and $z*y= 0$ imply $x*y=0$. }
\end{definition}

\begin{theorem}\label{1.6}
{\rm \cite{NK}} Let $(X; *,0)$ be a
$d$-transitive edge $d$-algebra. Then $(X;*,0)$  is a
$BCK$-algebra.
\end{theorem}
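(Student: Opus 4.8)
The statement to prove is that conditions (V) and (VI) hold in any $d$-transitive edge $d$-algebra, since these are the only axioms beyond (I)--(III) required for a $BCK$-algebra. Throughout, the plan is to exploit the edge property in the usable form ``$x*a \in \{x,0\}$ for every $a$'' together with Lemma \ref{1.3}, which guarantees $x*0 = x$. A useful preliminary observation, which guides the whole argument, is that $d$-transitivity will be invoked exactly once, and only in the verification of (V); condition (VI) will come out for free from the edge structure.

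For (VI) I would argue by cases on the value of $x*y$. If $x*y = 0$, then $x*(x*y) = x*0 = x$ by Lemma \ref{1.3}, so $(x*(x*y))*y = x*y = 0$. If instead $x*y = x$, then $x*(x*y) = x*x = 0$ by (I), whence $(x*(x*y))*y = 0*y = 0$ by (II). Because the edge property forces $x*y \in \{x,0\}$, these two cases are exhaustive, and they settle (VI) using neither (III) nor $d$-transitivity.

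For (V) I would set $p = x*y$, $q = x*z$, $r = z*y$ and analyze $(p*q)*r$, using $p,q \in \{x,0\}$ and $r \in \{z,0\}$. When $p = 0$, or when $p = q = x$, the inner product $p*q$ collapses to $0$ via (II) or (I), and then $0*r = 0$ closes the case. The genuinely delicate configuration is $p = x$ (so $x \neq 0$) with $q = 0$, where $p*q = x*0 = x$ by Lemma \ref{1.3}, so $(p*q)*r = x*r$; if $r = z$ this equals $x*z = q = 0$, but if $r = 0$ it equals $x$, which is $0$ only when $x = 0$. This last sub-case, namely $x*y = x$ together with $x*z = 0$ and $z*y = 0$, is precisely where $d$-transitivity is indispensable: transitivity yields $x*y = 0$, contradicting $x*y = x \neq 0$, so the sub-case simply cannot occur.

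I expect this final sub-case of (V) to be the main obstacle, and it is exactly the point at which the edge hypothesis alone is insufficient: without transitivity one can exhibit an edge $d$-algebra with $x \leq z \leq y$ but $x \not\leq y$, and (V) fails there. (Alternatively, one may invoke Proposition \ref{1.4} to obtain (V) directly for edge algebras, but I prefer the explicit case analysis because it isolates the role of $d$-transitivity.) Once (V) and (VI) are verified, the algebra satisfies all the defining $BCK$ axioms, which completes the proof.
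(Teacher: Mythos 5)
Your proof is correct and complete; note that the paper itself offers no proof of Theorem \ref{1.6}, citing it from \cite{NK}, so your case analysis is the only argument on the table and it checks out. The edge property gives $x*y\in\{x,0\}$, Lemma \ref{1.3} gives $x*0=x$, your two cases for (VI) are exhaustive, and your analysis of $(x*y,x*z,z*y)\in\{x,0\}\times\{x,0\}\times\{z,0\}$ for (V) correctly isolates $d$-transitivity as the tool that kills the one bad configuration $x*y=x\neq 0$, $x*z=0$, $z*y=0$. The most valuable by-product is the observation you make almost in passing: (V) genuinely fails in edge $d$-algebras that are not $d$-transitive (a four-element table with $x*z=z*y=0$ and $x*y=x$ realizes the failure, since $((x*y)*(x*z))*(z*y)=(x*0)*0=x\neq 0$), whereas (VI) follows from the edge property alone. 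This is the opposite of what the paper's scaffolding suggests --- Proposition \ref{1.4} asserts that edge alone yields (V) --- and your own analysis shows that statement cannot be right as printed; in \cite{NK} the corresponding proposition concerns (VI), and (V) is precisely where transitivity enters, exactly as you found. Accordingly, you should delete your parenthetical fallback ``alternatively, one may invoke Proposition \ref{1.4} to obtain (V) directly for edge algebras,'' since it contradicts the counterexample you describe two sentences earlier; your explicit case analysis is the correct route and needs no such crutch.
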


Both conditions, i.e., $d$-transitive and edge, are necessary for
a $d$-algebra of this type to be a $BCK$-algebra. Thus, arbitrary
$BCK$-algebras do not always have the edge property even if the
standard examples derived from posets do indeed possess it.

\begin{example}\label{1.7}
{\rm Let $X:=\{0, 1, 2, \cdots \}$ and the binary operation
 ``$\ast$" be defined  as follows:}
\[x*y:=\left\{ \begin{array}{ll}  0  & \text{\rm if} \  \ x\leq y,
\\1 & \text{\rm otherwise}.
\end{array}
\right.\]

\noindent {\rm Then $x*z = 0$, $z*y=0$ implies $x\leq z, z\leq y$
and in particular $x\leq y$, i.e., $x*y = 0$ also. Furthermore,
$x*x = 0, 0*x = 0$ and $x*y = y*x = 0$ if $x\leq y, y\leq x$,
whence $x=y$. Thus, the algebra $(X;*,0)$ is a $d$-transitive
non-edge $d$-algebra. Also, $(2*(2*0))*0 = (2*1)*0 = 1*0 = 1\ne 0$,
so that  $(X;*,0)$ is not a $BCK$-algebra.    }
\end{example}

\section{The main results}
Given a $d$-algebra $(A; \ast, 0)$, we construct the $d$-triple $(A^{\nabla}; \star, \epsilon(0))$, where 
\begin{itemize}
\item[(i)] $A^{\nabla}$ is the normalizer of  $(A; \ast, 0)$, such that\\ $ \ad =\{(a,b,c) \,\vert\, a\ast b = 0 = b\ast c, a,b, c, \in A\}$
\item[(ii)]  $\star: A^{\nabla}\times \ad\to \ad$ is defined as  $(a,b,c)\star(d,e,f) := (a*f, b*e, c*d)$ such that  $(a*f)*(b*e) = 0 = (b*e)*(c*d),$ for $a, b, c, d, e, f \in A.$
\item[(iii)] $\epsilon(x): A\to \ad$ is the triple projection of $x$, such that $\epsilon(x)=(x,x,x)$, for any $x\in A.$
\end{itemize}

Note that $\ad$ is a non-empty set, since $0\in A$ and $0\ast 0=0$, we get  $\epsilon(0)=(0,0,0)\in \ad,$ and so $\epsilon(0)\in \ad.$ Furthermore, by using (I), we have  $\epsilon(x)\in\ad$ and $\epsilon(x)\star\epsilon(x)=\epsilon(0),$ for any $x\in A$.  Also,  $ \epsilon(0)\star(a,b,c) = \epsilon(0)$, for any
  $(a,b,c)\in \ad$. If $A$ is an edge $d$-algebra
  then $(a,b,c) \star  \epsilon(0) = (a,b,c)$.
  Actually, $(\ad; \star, \epsilon(0))$ is only a partial algebra
  since the operation $\star$ may not be defined everywhere.

\begin{example}\label{2.1}
(i)  Consider the $d$-algebra $(A;*,0)$, where $*$ is defined by Table 1 below.
\\

\begin{table}
					\small
					\caption{$d$-algebra $(A;*,0)$}\label{eqtable}
	\begin{center}
	    
	\begin{tabular}{c|ccccc}
		\hline
		$\ast$  & 0 & 1 & 2 & 3 & 4\\
		\hline
   		0  & 0 & 0 & 0 & 0 & 0\\
		1  & 1 & 0 & 2 & 0 & 4\\
		2  & 2 & 2 & 0 & 3 & 0\\
        3  & 3 & 3 & 3 & 0 & 3\\
 4  & 4 & 4 & 4 & 1 & 0\\
		\hline
\end{tabular}
\end{center}
\end{table}

 It is easy to see  that
 $(0, 2,4)\star(0, 1,3) = (0,2,4)$, while $(0,1,3)\star(0,2,4)$ is not
 defined since $(1*2)*(3*0) = 2*3 = 3 \not = 0$.
\\
(ii) Consider the $d$-algebra $(\R,\ast,0)$ which is given in Example \ref{1.1}. By calculations, we see that $\R^{\bigtriangledown}=\{\epsilon(x), (0,x,x), (0,0,x)|x\in\R\}.$
\end{example}

 Note that
 $(\ad; \star, \epsilon(0))$ does not satisfy axiom (I), since
 $(0,2,4)\star(0,2,4) = (0,0, 4)$ in Example \ref{2.1} (i). Furthermore, for every $(a,b,c)\in\ad$, may be $(c,b,a)\not\in\ad$, in general, e.g.,  in Example \ref{2.1} (ii), $(0,2,2)\in\R^{\bigtriangledown}$, but $(2,2,0)\not\in\R^{\bigtriangledown}$, since $2\ast 2=0$ but  $2\ast 0=2.$ 

\begin{proposition}
Let $(A;\ast,0)$ be a $d$-algebra {\rm(}resp. $d^*$-algebra{\rm)}. 
Then $(S;\star,\epsilon(0))$ is a $d$-algebra  {\rm(}resp. $d^*$-algebra{\rm)}, where   $S=\{\epsilon(x)|x\in A\}.$
\end{proposition}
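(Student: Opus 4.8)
The plan is to show that the triple projection $\epsilon\colon A\to S$ is an isomorphism of $d$-algebras, so that $(S;\star,\epsilon(0))$ is merely a faithful copy of $(A;*,0)$ and every axiom transfers along $\epsilon$. The single point requiring genuine care is that $\star$ is only a \emph{partial} operation on $\ad$; before invoking any isomorphism argument I must confirm that its restriction to $S$ is everywhere defined. (That $S\subseteq\ad$ is already available: by (I) each $\epsilon(x)=(x,x,x)$ satisfies $x*x=0=x*x$, so $\epsilon(x)\in\ad$.)

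First I would compute the product of two projections. Using the definition of $\star$,
\[
\epsilon(x)\star\epsilon(y)=(x,x,x)\star(y,y,y)=(x*y,\,x*y,\,x*y)=\epsilon(x*y).
\]
The defining compatibility condition for this product, namely $(x*y)*(x*y)=0=(x*y)*(x*y)$, holds automatically by axiom (I). Hence $\star$ is \emph{total} on $S$, the set $S$ is closed under $\star$, and $\epsilon(x)\star\epsilon(y)=\epsilon(x*y)$ for all $x,y\in A$. This is the step I expect to be the crux of the argument: it is precisely what rescues $S$ from the partiality that prevents $\ad$ itself from being an honest algebra, and once it is in hand the remainder is formal.

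Next I would observe that $\epsilon$ is a bijection of $A$ onto $S$: it is injective, since $\epsilon(x)=\epsilon(y)$ forces $x=y$ componentwise, and surjective by the definition $S=\{\epsilon(x)\mid x\in A\}$. Together with $\epsilon(0)$ being the designated constant and the displayed homomorphism identity $\epsilon(x*y)=\epsilon(x)\star\epsilon(y)$, this makes $\epsilon$ an isomorphism onto $(S;\star,\epsilon(0))$, so that any axiom of $d$-algebra (or $d^*$-algebra) type satisfied by $A$ is inherited by $S$.

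Finally I would transfer the axioms explicitly. For (I), $\epsilon(x)\star\epsilon(x)=\epsilon(x*x)=\epsilon(0)$. For (II), $\epsilon(0)\star\epsilon(x)=\epsilon(0*x)=\epsilon(0)$. For (III), if $\epsilon(x)\star\epsilon(y)=\epsilon(0)=\epsilon(y)\star\epsilon(x)$, then $\epsilon(x*y)=\epsilon(0)=\epsilon(y*x)$, whence $x*y=0=y*x$ by injectivity of $\epsilon$, so $x=y$ by (III) in $A$ and therefore $\epsilon(x)=\epsilon(y)$; this settles the $d$-algebra claim. For the $d^*$-algebra case I would assume (IV) in $A$ and compute $(\epsilon(x)\star\epsilon(y))\star\epsilon(x)=\epsilon((x*y)*x)=\epsilon(0)$, establishing (IV) for $S$ and completing the proof.
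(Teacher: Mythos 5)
Your proof is correct: the key computation $\epsilon(x)\star\epsilon(y)=(x*y,x*y,x*y)=\epsilon(x*y)$, whose compatibility condition holds by (I), shows $\star$ is total on $S$ and that $\epsilon$ is an isomorphism, after which all axioms transfer. The paper states this proposition without proof, but its preceding remarks (that $\epsilon(x)\in\ad$ and $\epsilon(x)\star\epsilon(x)=\epsilon(0)$ by (I)) indicate exactly this componentwise argument, so your write-up correctly supplies the omitted details.
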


\begin{theorem}\label{2.2}
 If $(A;*,0)$ is a $BCK/BCI$-algebra, then $(\ad; \star, \epsilon(0))$
 is an algebra, i.e., $\star$ is a binary operation with domain
 $\ad\times \ad$.
\end{theorem}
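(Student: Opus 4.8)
The plan is to show that for arbitrary $(a,b,c),(d,e,f)\in\ad$ the triple $(a*f,b*e,c*d)$ again lies in $\ad$; by the very definition of $\ad$ this amounts to verifying the two equalities $(a*f)*(b*e)=0$ and $(b*e)*(c*d)=0$, which is exactly what the assertion ``$\star$ has domain $\ad\times\ad$'' demands. Membership of the two factors in $\ad$ supplies the hypotheses $a*b=0=b*c$ and $d*e=0=e*f$; writing these in terms of the relation $\le$ (defined by $x\le y\iff x*y=0$) I have the two chains $a\le b\le c$ and $d\le e\le f$. Thus the whole statement reduces to transporting the order $\le$ through the operation $*$.

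First I would record the order-theoretic facts I need, all valid in any $BCK/BCI$-algebra. From (I) and (III) the relation $\le$ is reflexive and antisymmetric, and transitivity follows from (VII) and (IX): if $x*y=0$ and $y*z=0$ then $(x*z)*(y*z)=(x*z)*0=x*z$ by (IX), while (VII) gives $(x*z)*(y*z)\le x*y=0$, so $x*z\le 0$ and hence $x*z=0$ again by (IX). Next, $*$ is isotone in its first argument: if $x\le y$ then (VII) yields $(x*z)*(y*z)\le x*y=0$, so $(x*z)*(y*z)=0$, i.e. $x*z\le y*z$. Finally, $*$ is antitone in its second argument: if $x\le y$ then $z*y\le z*x$. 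This last fact follows from the exchange identity $(u*v)*w=(u*w)*v$ together with (VIII) and the transitivity just established, via $(z*y)*(z*x)=(z*(z*x))*y$ and $z*(z*x)\le x\le y$.

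With these three facts in hand the theorem is a two-line computation. For the first coordinate I chain $a*f\le b*f$ (isotonicity in the first slot, using $a\le b$) with $b*f\le b*e$ (antitonicity in the second slot, using $e\le f$), and transitivity delivers $a*f\le b*e$, that is $(a*f)*(b*e)=0$. For the third coordinate I chain $b*e\le c*e$ (isotonicity, from $b\le c$) with $c*e\le c*d$ (antitonicity, from $d\le e$) to obtain $b*e\le c*d$, that is $(b*e)*(c*d)=0$. Hence $(a*f,b*e,c*d)\in\ad$, and since the pair was arbitrary, $\star$ is total on $\ad\times\ad$, as required.

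I expect the only genuine obstacle to be the second-argument antitonicity: isotonicity in the first argument falls straight out of (VII) and (IX), whereas the order-reversing behaviour in the second slot is the one $BCK/BCI$ fact that is not a one-line consequence of the listed properties and instead needs the exchange law (or an equivalent rearrangement of (VII) and (VIII)). Everything else is bookkeeping with the two defining inequalities of the normalizer, and it is worth noting that this is precisely where the $BCK/BCI$ hypothesis is used: for a general $d$-algebra these monotonicity properties fail, which is why $\star$ is only partial in that setting, as Example~\ref{2.1} shows.
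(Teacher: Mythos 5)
Your proof is correct and follows the same overall strategy as the paper's: both arguments reduce $(a*f)*(b*e)=0$ and $(b*e)*(c*d)=0$ to first-argument isotonicity of $*$ (from (VII) and (IX)), second-argument antitonicity, and transitivity of $\leq$. The one place you diverge is the antitonicity step. You derive it from the exchange identity $(u*v)*w=(u*w)*v$ together with (VIII), which is valid but leans on a standard $BCK/BCI$ identity that the paper never states and that would itself require a derivation from the axioms. The paper instead gets antitonicity in one line directly from axiom (V): $((a*f)*(a*e))*(e*f)=0$, and since $e*f=0$, property (IX) gives $(a*f)*(a*e)=0$, i.e.\ $a*f\leq a*e$ whenever $e\leq f$. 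So your closing remark that the order-reversing behaviour in the second slot ``is not a one-line consequence of the listed properties'' is mistaken --- it is exactly a one-line consequence of (V) and (IX), and adopting that route would make your argument self-contained with respect to the facts the paper actually lists. Everything else in your write-up (the chains $a*f\leq b*f\leq b*e$ and $b*e\leq c*e\leq c*d$, and the proof of transitivity of $\leq$) matches the paper's computation.
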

\begin{proof}

Let $(a,b,c), (d,e,f)\in \ad$. Then $a*b = b*c = 0$ and $d*e = e*f = 0$.
 Since $A$ is a $BCK/BCI$-algebra, we have $((a*f)*(a*e))*(e*f) = 0$.
 It follows from (IX) that $(a*f)*(a*e)=0$.
 Applying (VII) we have $(a*e)*(b*e)\leq a*b = 0$
 and so $(a*e)*(b*e) =0$ by (IX).
 Using (VII), we obtain $(a*f)*(b*e) = 0$.
 Similarly, we obtain $(b*e)*(c*d) =0$.
 This means that $(a,b,c)\star(d,e,f)\in \ad$, which proves the theorem.

\end{proof}

\begin{proposition}\label{2.3}
 If $(A;*,0)$ is a $d$-transitive $d$-algebra, then $(a,b,c)\star (a,b,c)\in \ad$, for any $(a,b,c)\in \ad$.
\end{proposition}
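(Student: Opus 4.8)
The plan is to substitute $(d,e,f)=(a,b,c)$ into the defining formula for $\star$ and then verify directly that the resulting triple satisfies the two equalities required for membership in $\ad$. Recall that $(x,y,z)\in\ad$ precisely when $x\ast y=0=y\ast z$, so it suffices to produce the triple and check these two relations on its coordinates.

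First I would expand: by the definition of $\star$, we have $(a,b,c)\star(a,b,c)=(a\ast c,\,b\ast b,\,c\ast a)$. Axiom (I) immediately gives $b\ast b=0$, so the middle coordinate collapses and the candidate triple is $(a\ast c,\,0,\,c\ast a)$. The key step is then to show $a\ast c=0$. Since $(a,b,c)\in\ad$, the defining relations give $a\ast b=0$ and $b\ast c=0$; this is exactly the hypothesis of $d$-transitivity (Definition \ref{1.5}) with $b$ as the intermediate element, and hence $a\ast c=0$. Thus the candidate simplifies to $(0,\,0,\,c\ast a)$.

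Finally, to confirm $(0,0,c\ast a)\in\ad$, I would check the two membership conditions: $0\ast 0=0$ by axiom (I) (or (II)), and $0\ast(c\ast a)=0$ by axiom (II). Both hold unconditionally, so $(a,b,c)\star(a,b,c)\in\ad$, which is the assertion.

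The argument is short, and the only substantive move is the appeal to $d$-transitivity to force $a\ast c=0$. I expect this to be the crux rather than an obstacle: it is essential, because in a general $d$-algebra the term $(a\ast c)\ast 0$ need not vanish (Example \ref{1.1} shows $x\ast 0=x^2$), so without transitivity the diagonal product $(a,b,c)\star(a,b,c)$ would not be guaranteed to lie in $\ad$. Everything else is routine bookkeeping with axioms (I) and (II).
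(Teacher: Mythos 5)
Your proof is correct and follows essentially the same route as the paper: expand the diagonal product to $(a\ast c,\,0,\,c\ast a)$, use $d$-transitivity on $a\ast b=0=b\ast c$ to get $a\ast c=0$, and conclude $(0,0,c\ast a)\in\ad$. The only difference is that you explicitly verify the membership conditions via axiom (II), which the paper leaves implicit.
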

\begin{proof}
Assume  $(a,b,c)\in \ad$. Then $a*b = 0 =b*c$. Since $A$ is $d$-transitive, $a*c = 0$.
 Hence $(a,b,c)\star(a,b,c)= (a*c, 0, c*a) = (0,0, c*a)\in \ad$.

\end{proof}

 The converse of Proposition \ref{2.3} is also true for the cases of
 edge $d$-algebras or $BCK/BCI$-algebras.
 Especially if it is an edge $d$-algebra then by Theorem \ref{1.6}, it should be a $BCK$-algebra.

\begin{proposition}\label{2.4}
 Let $(A;*,0)$ be a $d$-algebra.
 If $(a,b,c)\star(d,e,f) = (d,e,f)\star(a,b,c) = \epsilon(0)$,
 then $(a,b,c) = (d,e,f) .$
\end{proposition}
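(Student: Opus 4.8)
The plan is to unwind both $\star$-products using the definition in (ii) and then feed the resulting equalities into axiom (III). Writing the hypotheses out, $(a,b,c)\star(d,e,f)=\epsilon(0)$ gives $a*f=0$, $b*e=0$, $c*d=0$, while $(d,e,f)\star(a,b,c)=\epsilon(0)$ gives $d*c=0$, $e*b=0$, $f*a=0$. Pairing these six equations coordinatewise, I have $a*f=f*a=0$, $b*e=e*b=0$, and $c*d=d*c=0$, so three applications of (III) yield $a=f$, $b=e$, and $c=d$.

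At this point a naive reading would declare victory, but the equalities just obtained are $a=f$, $b=e$, $c=d$, which only say $(d,e,f)=(c,b,a)$ --- a \emph{reversal} of $(a,b,c)$, not $(a,b,c)$ itself. This is the crux of the argument: to identify the two triples I must still show $a=c$ (equivalently $a=b=c$), and for that I would invoke the defining membership conditions of the normalizer $\ad$.

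Since $(a,b,c)\in\ad$ we have $a*b=0$ and $b*c=0$; since $(d,e,f)\in\ad$ we have $d*e=0$ and $e*f=0$, which after substituting $d=c$, $e=b$, $f=a$ become $c*b=0$ and $b*a=0$. Now $a*b=b*a=0$ forces $a=b$ by (III), and $b*c=c*b=0$ forces $b=c$, so $a=b=c$. Combined with $a=f$, $b=e$, $c=d$, every coordinate collapses and both triples equal $\epsilon(a)$, giving $(a,b,c)=(d,e,f)$.

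The main obstacle is precisely the reversal noted above: three uses of (III) produce the ``wrong'' matching $a=f,\,b=e,\,c=d$ rather than $a=d,\,b=e,\,c=f$, so the proof cannot close on the $\star$-relations alone and genuinely needs the $\ad$-membership constraints to pin $(a,b,c)$ down as a constant triple. Notably, no transitivity or edge hypothesis is required here --- only axiom (III) is used throughout.
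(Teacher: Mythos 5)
Your proof is correct and follows essentially the same route as the paper's: both first use axiom (III) to get the reversed matching $a=f$, $b=e$, $c=d$, and then combine the $\ad$-membership conditions of the two triples with (III) again to collapse everything to a constant triple $\epsilon(a)$. The only cosmetic difference is that the paper substitutes into $(f,e,d)=(a,b,c)\in\ad$ while you substitute into $(d,e,f)\in\ad$; your explicit flagging of the reversal is a nice touch but the argument is the same.
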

\begin{proof}
Let $(a,b,c)\star(d,e,f) = (d,e,f)\star(a,b,c) = \epsilon(0)$. Then
 $a*f = b*e = c*d = 0$ and $d*c = e*b = f*a = 0$.
 Since $A$ is a $d$-algebra, we obtain $a=f, b=e, c=d$.
 Hence $(f,e,d)= (a,b,c) \in\ad$. This means that $f*e = e*d = 0$. Since $(d,e,f)\in \ad$, we have $f = e=d$. Thus, $(a,b,c) = (d,e,f).$
 
\end{proof}

\begin{theorem}\label{2.5}
 Let $(A;*,0)$ be an edge $d$-algebra and
 $(a,b,c)\star(d,e,f)\in\ad$.
 Then $(a,b,c)\star(d,e,f)= \epsilon(0)$ if and only if $c*d = 0$.

\end{theorem}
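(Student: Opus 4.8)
The plan is to unwind the product coordinatewise and to exploit, on the one hand, the membership hypothesis $(a,b,c)\star(d,e,f)\in\ad$ and, on the other, the edge identity $x*0=x$ supplied by Lemma \ref{1.3}.

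First I would dispatch the forward implication, which is immediate. Writing $(a,b,c)\star(d,e,f)=(a*f,b*e,c*d)$, if this triple equals $\epsilon(0)=(0,0,0)$ then in particular its third coordinate satisfies $c*d=0$.

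For the converse I would assume $c*d=0$ and try to force the first two coordinates to vanish as well. The crucial observation is that the hypothesis $(a,b,c)\star(d,e,f)\in\ad$ is precisely what delivers the two defining relations $(a*f)*(b*e)=0$ and $(b*e)*(c*d)=0$. Substituting $c*d=0$ into the second relation yields $(b*e)*0=0$; since $A$ is an edge $d$-algebra, Lemma \ref{1.3} gives $(b*e)*0=b*e$, whence $b*e=0$. Feeding this back into the first relation gives $(a*f)*0=0$, and a second application of the edge identity forces $a*f=0$. Thus all three coordinates of the product are $0$, i.e. $(a,b,c)\star(d,e,f)=\epsilon(0)$.

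I do not expect a genuine obstacle here; the only point that needs care is recognizing that the membership assumption $(a,b,c)\star(d,e,f)\in\ad$ is exactly the ingredient that produces the two equations needed to propagate $c*d=0$ through the remaining coordinates. The edge hypothesis is doing the real work: without it one could not convert $(b*e)*0=0$ back into $b*e=0$, so Lemma \ref{1.3} is indispensable, and this is why the edge assumption on $A$ cannot be dropped.
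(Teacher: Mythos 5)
Your proof is correct and follows essentially the same route as the paper's: both use the membership relations $(a*f)*(b*e)=0=(b*e)*(c*d)$ together with the edge identity $x*0=x$ from Lemma \ref{1.3} to propagate $c*d=0$ first to $b*e=0$ and then to $a*f=0$, with the forward direction read off from the third coordinate. No gaps.
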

\begin{proof}
 Suppose $c*d = 0$.
 Since $(a,b,c)\star(d,e,f)\in\ad$ and $A$ is an edge $d$-algebra,
 $0=(b*e)*(c*d) = (b*e)*0 = b*e$.
 Hence $a*f = (a*f)*0 = (a*f)*(b*e) = 0$, proving
 $(a,b,c)\star(d,e,f)= \epsilon(0)$.
 The converse is trivial, and we omit it.
 
\end{proof}

 We write $(a,b,c)<(d,e,f)$ provided
 $(a,b,c)\not =(d,e,f)$ and $(a,b,c)\star(d,e,f)=\epsilon(0)$, e.g., in Example \ref{2.1} (ii), $(0,0,3)<(0,3,3)$, but $(0,0,2)\not< (0,3,3)$, since \[(0,0,2)\star(0,3,3)=(0\ast 3,0\ast 3,2\ast0)=(0,0,4)\ne\epsilon(0).\]

 Note that 
since $\epsilon(0)\star (a,b,c)=\epsilon(0)$, we get $\epsilon(0)<(a,b,c)$ when $(a,b,c)\not =\epsilon(0)$, i.e.,
 $\epsilon(0)$ is the unique minimal element in this ``order". Applying Proposition \ref{2.5}, we get ``$<$" is antisymmetric.
 \\
 Given an element $(a,b,c)\in \ad$ we define
 $d(a,b,c):=c*a$, and we call it the {\it diameter} of $(a,b,c)$.
\\
Note that, let $(A;\ast,0)$ be a $d$-algebra. Using (I) and (II), we get  $d(\epsilon(x))=d(x,x,x)=x\ast x=0$, $d(x,y,x)=x\ast x=0$ and $d(x,y,0)=0\ast x=0$,  for any $x, y\in A$. Furthermore, if $(A;\ast,0)$ is a $BCK$-algebra,  then $d(0,y,z)=z\ast 0=0$, for all $y,z\in A.$
\\
In Example \ref{2.1} (ii), $d(0,0,x)=d(0,x,x)=x\ast 0=x\cdot(x-0)=x^2$, for every $x\in\R.$ Hence, in such case the diameters $(0,0,x)$ and $(0,x,x)$ are zero if and only if $x=0.$

\begin{proposition}\label{2.7}
 If $(A;*,0)$ is a $d$-algebra and $d(a,b,c)=d(c,b,a)=0$, then ``$<$" is reflexive.

\end{proposition}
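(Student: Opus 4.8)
The plan is to unwind the definition of the diameter and compute the self-product $(a,b,c)\star(a,b,c)$ directly. By definition $d(a,b,c)=c\ast a$, so the hypothesis $d(a,b,c)=d(c,b,a)=0$ translates precisely into the two scalar identities $c\ast a=0$ and $a\ast c=0$. These are exactly the two ``off-diagonal'' entries that arise when a triple is $\star$-multiplied by itself, so I expect the argument to close as soon as the definitions are substituted.

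First I would apply the rule $(a,b,c)\star(d,e,f)=(a\ast f,b\ast e,c\ast d)$ with $(d,e,f)=(a,b,c)$ to obtain
\[
(a,b,c)\star(a,b,c)=(a\ast c,\;b\ast b,\;c\ast a).
\]
The middle entry is $b\ast b=0$ by axiom (I), while the two outer entries vanish by hypothesis, since $a\ast c=d(c,b,a)=0$ and $c\ast a=d(a,b,c)=0$. Hence all three coordinates are $0$ and $(a,b,c)\star(a,b,c)=(0,0,0)=\epsilon(0)$. Before asserting that this is a genuine element of $\ad$ I would verify definedness: the constraint built into $\star$ requires $(a\ast c)\ast(b\ast b)=0$ and $(b\ast b)\ast(c\ast a)=0$, but with all three coordinates equal to $0$ these reduce to $0\ast 0=0$, which again holds by (I). Thus the product is legitimately defined and equals $\epsilon(0)$, which is the reflexivity assertion for the order induced by $\star$.

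I do not anticipate any genuine computational obstacle; the only point requiring care is interpretive rather than arithmetical. Since ``$<$'' was defined with a built-in strictness clause $(a,b,c)\neq(d,e,f)$, I would read reflexivity here as the statement that the self-$\star$-product collapses to $\epsilon(0)$ under the diameter hypothesis. The content of the proposition is then that the condition $d(a,b,c)=d(c,b,a)=0$ is exactly what repairs the failure of axiom (I) recorded earlier, where $(0,2,4)\star(0,2,4)=(0,0,4)\neq\epsilon(0)$: the nonzero last coordinate there was precisely a nonvanishing diameter. One could additionally invoke axiom (III) to note that $c\ast a=a\ast c=0$ forces $a=c$, but this is not needed, and the three-line computation above suffices.
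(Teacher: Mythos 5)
Your computation is correct: the paper states Proposition~\ref{2.7} without proof, and your direct evaluation $(a,b,c)\star(a,b,c)=(a\ast c,\, b\ast b,\, c\ast a)=(0,0,0)=\epsilon(0)$, together with the check that the product is actually defined, is exactly the intended argument. Your interpretive point is also well taken: since ``$<$'' as defined carries a strictness clause, reflexivity here can only mean that the self-product collapses to $\epsilon(0)$, which is what your hypothesis on the two diameters delivers (and, as you note, axiom (III) then even forces $a=c$, so such triples are in fact of the form $\epsilon(a)$, consistent with Proposition~\ref{2.6}).
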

\begin{proposition}\label{2.6}
 If $(A;*,0)$ is a $d$-transitive $d$-algebra, then $d(a,b,c) = 0$ if and only if $(a,b,c) =\epsilon(a)$.

\end{proposition}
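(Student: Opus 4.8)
The plan is to prove both implications directly from the membership condition defining $\ad$, together with the $d$-transitivity hypothesis and axioms (I) and (III). Throughout I would keep in mind that $(a,b,c)\in\ad$ encodes exactly the two relations $a*b=0$ and $b*c=0$, and that the diameter condition adds the relation $c*a=0$.

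For the nontrivial direction, suppose $(a,b,c)\in\ad$ with $d(a,b,c)=c*a=0$. From membership in $\ad$ we have $a*b=0$ and $b*c=0$. First I would apply $d$-transitivity (Definition \ref{1.5}) to the pair $a*b=0$, $b*c=0$ to obtain $a*c=0$. Combining this freshly produced relation with the hypothesis $c*a=0$, axiom (III) forces $a=c$.

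The remaining step—the one requiring a little care—is to upgrade $a=c$ to the full equality $a=b=c$. Having $a=c$, I would substitute into the defining relation $b*c=0$ to get $b*a=0$; together with $a*b=0$ already in hand, axiom (III) again yields $a=b$. Hence $a=b=c$, which is precisely $(a,b,c)=(a,a,a)=\epsilon(a)$. The converse is immediate: if $(a,b,c)=\epsilon(a)=(a,a,a)$ then $c=a$, so $d(a,b,c)=c*a=a*a=0$ by axiom (I).

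The main (mild) obstacle is recognizing that establishing $a=c$ alone does not finish the argument: one must re-use the relation $b*c=0$ \emph{after} the substitution $c=a$ to pin down $b$, and $d$-transitivity is exactly what supplies the otherwise-missing relation $a*c=0$ needed to invoke antisymmetry (III) in the first place. Without $d$-transitivity the forward implication would fail, so this hypothesis is doing the essential work.
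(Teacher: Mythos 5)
Your proof is correct; the paper states this proposition without proof, and your argument — using $d$-transitivity on $a*b=0$, $b*c=0$ to produce $a*c=0$, then axiom (III) with the hypothesis $c*a=0$ to get $a=c$, and a second application of (III) to $a*b=0$, $b*a=0$ to get $a=b$ — is exactly the natural intended one. The converse via axiom (I) is likewise handled correctly.
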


The following proposition is easy to prove.

\begin{proposition}\label{2.8}
 If $(A;*,0)$ be $d$-transitive  $d$-algebra,
 then ``$<$" is transitive.
\end{proposition}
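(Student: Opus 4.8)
The plan is to fix three elements $(a,b,c),(d,e,f),(g,h,i)\in\ad$ satisfying $(a,b,c)<(d,e,f)$ and $(d,e,f)<(g,h,i)$, and to show $(a,b,c)<(g,h,i)$. First I would unpack the two strict relations into coordinate equations. Membership in $\ad$ gives $a\ast b=b\ast c=0$, $d\ast e=e\ast f=0$ and $g\ast h=h\ast i=0$, while the equalities $(a,b,c)\star(d,e,f)=\epsilon(0)$ and $(d,e,f)\star(g,h,i)=\epsilon(0)$ give $a\ast f=b\ast e=c\ast d=0$ and $d\ast i=e\ast h=f\ast g=0$. Since $(a,b,c)\star(g,h,i)=(a\ast i,b\ast h,c\ast g)$, the goal $(a,b,c)\star(g,h,i)=\epsilon(0)$ amounts to the three equations $a\ast i=0$, $b\ast h=0$, $c\ast g=0$.

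It is convenient to read each such equation through the relation $x\leq y\iff x\ast y=0$, under which $d$-transitivity is precisely transitivity of $\leq$. The hypotheses then say $a\leq b\leq c$, $d\leq e\leq f$, $g\leq h\leq i$, together with $a\leq f$, $b\leq e$, $c\leq d$ and $d\leq i$, $e\leq h$, $f\leq g$, while the goal is $a\leq i$, $b\leq h$, $c\leq g$. Each of these follows from a finite $\leq$-chain: $b\leq e\leq h$ yields $b\leq h$; $a\leq b\leq e\leq h\leq i$ yields $a\leq i$; and $c\leq d\leq e\leq f\leq g$ yields $c\leq g$. Applying $d$-transitivity along these chains shows $(a,b,c)\star(g,h,i)=\epsilon(0)$; in particular this product is defined and lies in $\ad$.

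It remains to verify the distinctness clause $(a,b,c)\neq(g,h,i)$ demanded by ``$<$''. This is the one step that is not a bare transitivity computation, and so is the point needing the most care. I would argue by contradiction: if $(a,b,c)=(g,h,i)$, then $(d,e,f)<(g,h,i)$ reads $(d,e,f)\star(a,b,c)=\epsilon(0)$, which combined with $(a,b,c)\star(d,e,f)=\epsilon(0)$ forces $(a,b,c)=(d,e,f)$ by Proposition \ref{2.4}. This contradicts $(a,b,c)<(d,e,f)$, which requires the two triples to be distinct. Hence $(a,b,c)\neq(g,h,i)$, and together with the previous paragraph we conclude $(a,b,c)<(g,h,i)$, so ``$<$'' is transitive.

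I expect no serious obstacle, consistent with the paper's remark that the proposition is easy; the only delicate points are choosing transitivity chains that respect the order-reversing pairing built into $\star$ (first coordinate paired with third, second with second, third with first), and invoking the antisymmetry-type Proposition \ref{2.4} to settle distinctness rather than treating the strict relation as purely computational.
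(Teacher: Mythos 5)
Your proof is correct; the paper itself omits the argument (it only remarks that the proposition ``is easy to prove''), and your chains $b\leq e\leq h$, $a\leq b\leq e\leq h\leq i$, $c\leq d\leq e\leq f\leq g$ are exactly the routine $d$-transitivity computation that remark alludes to, correctly respecting the first-with-third pairing in the definition of $\star$. Your handling of the distinctness clause via Proposition \ref{2.4} is the one genuinely non-automatic step, and you resolve it properly; nothing is missing.
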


\begin{theorem}\label{2.9}
Let $(A;\ast,0)$ be a  $d$-transitive  $d$-algebra and $d(a,b,c)=0$, for any $(a,b,c)\in\ad$.
 Then $(\ad;<)$ is a poset. 
\end{theorem}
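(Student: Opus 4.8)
The goal is to verify that the relation ``$<$'' satisfies the three defining properties of a partial order---reflexivity, antisymmetry, and transitivity---on the set $\ad$. Each of these has essentially been isolated in the preceding propositions, so the main work is to check that the blanket hypothesis $d(a,b,c)=0$ for all $(a,b,c)\in\ad$ supplies exactly what those propositions require, and then to assemble them.

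The first and decisive step is to extract the structural consequence of the hypothesis. Since $A$ is $d$-transitive and $d(a,b,c)=0$ for every element of $\ad$, Proposition~\ref{2.6} applies to each element and forces $(a,b,c)=\epsilon(a)$. Thus under our assumptions $\ad$ collapses to the diagonal set $\{\epsilon(x)\mid x\in A\}$, where $\epsilon(a)\star\epsilon(b)=\epsilon(a\ast b)$, so that $\epsilon(a)<\epsilon(b)$ amounts to $a\ast b=0$ together with $a\neq b$. This reduction is what makes reflexivity available: for a diagonal element $(a,b,c)=\epsilon(a)$ we have $a=b=c$, hence $d(c,b,a)=a\ast a=0$ as well, so both hypotheses of Proposition~\ref{2.7} are met and ``$<$'' is reflexive.

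For antisymmetry I would simply invoke Proposition~\ref{2.4}: if $(a,b,c)\star(d,e,f)=(d,e,f)\star(a,b,c)=\epsilon(0)$, then $(a,b,c)=(d,e,f)$, which is precisely the antisymmetry of ``$<$'' (this was already observed in the remark preceding the theorem). Transitivity is immediate from Proposition~\ref{2.8}, whose only standing hypothesis is $d$-transitivity of $A$, which we have. Combining reflexivity, antisymmetry, and transitivity then yields that $(\ad;<)$ is a poset.

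The step I expect to be the genuine obstacle---or at least the one requiring care---is reconciling the reflexivity clause with the \emph{strict}-looking definition ``$(a,b,c)\neq(d,e,f)$ and $(a,b,c)\star(d,e,f)=\epsilon(0)$''. Read literally, no element can be $<$ itself, so ``reflexive'' must be understood through the associated non-strict relation $(a,b,c)\leq(d,e,f)$ defined by $(a,b,c)\star(d,e,f)=\epsilon(0)$; the content of Proposition~\ref{2.7} is then that $(a,b,c)\star(a,b,c)=\epsilon(0)$, which under our hypotheses holds because every element is diagonal. Making this identification explicit, and confirming that the hypothesis truly yields $d(c,b,a)=0$ in addition to $d(a,b,c)=0$ via the collapse to diagonal triples, is the crux; the remaining two axioms are then direct citations.
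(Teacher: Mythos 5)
Your proposal is correct and follows exactly the route the paper intends: the paper gives no explicit proof of Theorem \ref{2.9}, but it is clearly meant to be assembled from Proposition \ref{2.6} (collapse to diagonal triples), Proposition \ref{2.7} (reflexivity), the antisymmetry remark via Proposition \ref{2.4}, and Proposition \ref{2.8} (transitivity), which is precisely what you do. Your explicit reconciliation of the strict-looking definition of ``$<$'' with the reflexivity claim is a point the paper silently glosses over, and you resolve it correctly.
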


\begin{corollary}\label{2.10}
If  $(A;\ast,0)$ is a  $d$-transitive $d$-algebra and $(a,b,c)=\epsilon(a),$ 
for any $(a,b,c)\in\ad$, then $(\ad;<)$ is a poset. 
\end{corollary}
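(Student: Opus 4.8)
The plan is to obtain Corollary \ref{2.10} directly from Theorem \ref{2.9}, using Proposition \ref{2.6} to rewrite its hypothesis. The two statements differ only in how the defining condition on the triples of $\ad$ is phrased: Theorem \ref{2.9} assumes the \emph{diameter} condition $d(a,b,c)=0$ for every $(a,b,c)\in\ad$, whereas the corollary assumes the \emph{shape} condition $(a,b,c)=\epsilon(a)$ for every $(a,b,c)\in\ad$. Proposition \ref{2.6}, which is available since $(A;\ast,0)$ is $d$-transitive, asserts precisely that these two conditions are equivalent on each triple, so the whole corollary reduces to translating one hypothesis into the other.

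First I would fix an arbitrary $(a,b,c)\in\ad$ and apply Proposition \ref{2.6} to it. By hypothesis $(a,b,c)=\epsilon(a)$, so the equivalence $d(a,b,c)=0 \iff (a,b,c)=\epsilon(a)$ yields $d(a,b,c)=0$. (One may also read this off directly: $d(\epsilon(a))=d(a,a,a)=a\ast a=0$ by axiom (I), so $d$-transitivity is really only needed for the reverse reading of Proposition \ref{2.6}, namely that $d(a,b,c)=0$ forces the diagonal shape.) Since the triple was arbitrary, I conclude that $d(a,b,c)=0$ holds for every $(a,b,c)\in\ad$.

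At this point the hypotheses of Theorem \ref{2.9} are met in full: $(A;\ast,0)$ is a $d$-transitive $d$-algebra, and the uniform diameter condition $d(a,b,c)=0$ holds throughout $\ad$. Invoking that theorem then gives that $(\ad;<)$ is a poset, which is exactly the conclusion sought, so the deduction is complete.

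Since the argument is a two-line derivation from results already in hand, I do not anticipate a genuine obstacle. The only point requiring care is the handling of the universal quantifier: Proposition \ref{2.6} is a pointwise equivalence, so one must apply it to each triple separately and then re-collect the resulting condition over all of $\ad$ before feeding it into Theorem \ref{2.9}. No fresh computation with the operation $\star$ or with the $d$-algebra axioms is needed beyond what those earlier statements already supply.
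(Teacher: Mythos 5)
Your derivation is correct and matches the paper's intent exactly: Corollary \ref{2.10} is stated as an immediate consequence of Theorem \ref{2.9}, obtained by noting (via Proposition \ref{2.6}, or directly from axiom (I) since $d(\epsilon(a))=a\ast a=0$) that the hypothesis $(a,b,c)=\epsilon(a)$ implies the diameter hypothesis $d(a,b,c)=0$. Your remark that only the trivial direction of the equivalence is actually needed is a fair observation and does not change the argument.
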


\begin{corollary}\label{2.11}
If $(A;\ast,0)$ be a  $d$-transitive  $d$-algebra and $d(a,b,c)=0$, for any $(a,b,c)\in\ad$, then 
$(\ad;<)$ induces a $BCK$-algebra. 
\end{corollary}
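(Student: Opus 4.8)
The plan is to deduce Corollary \ref{2.11} from two ingredients that are already available: Theorem \ref{2.9}, whose hypotheses coincide exactly with those assumed here, and the general recipe for turning a poset into a $BCK$-algebra recorded in the Preliminaries (the paragraph just before Lemma \ref{1.3}). Since $(A;*,0)$ is $d$-transitive and $d(a,b,c)=0$ for every $(a,b,c)\in\ad$, Theorem \ref{2.9} tells us that $(\ad;<)$ is a poset. Write $\leq$ for the associated reflexive order, so that $u\leq v$ means $u=v$ or $u<v$ for $u,v\in\ad$; antisymmetry and transitivity of this order are precisely what Theorem \ref{2.9} supplies (via the antisymmetry of ``$<$'' noted after Theorem \ref{2.5} and Proposition \ref{2.8}).

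The next step is to pin down the bottom element of this poset. From the remark preceding Proposition \ref{2.7}, $\epsilon(0)\star(a,b,c)=\epsilon(0)$ for every $(a,b,c)\in\ad$, hence $\epsilon(0)<(a,b,c)$ whenever $(a,b,c)\neq\epsilon(0)$; together with reflexivity this yields $\epsilon(0)\leq(a,b,c)$ for all $(a,b,c)\in\ad$. Thus $\epsilon(0)$ is a genuine least element of $(\ad;\leq)$, not merely a minimal one. I would emphasize this distinction, because it is exactly what makes the constant of the induced algebra well chosen.

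With the poset and its bottom element in hand, I would invoke the construction recalled in the Preliminaries: define a new binary operation $\bullet$ on $\ad$ by $u\bullet v:=\epsilon(0)$ if $u\leq v$ and $u\bullet v:=u$ otherwise. The claim is that $(\ad;\bullet,\epsilon(0))$ is the $BCK$-algebra induced by the poset. Verifying the axioms is the routine case analysis that this general fact guarantees: (I) $u\bullet u=\epsilon(0)$ follows from reflexivity; (III) follows from antisymmetry of $\leq$; and (V)--(VI) are the standard poset computations, handled by splitting on whether the relevant comparisons hold.

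The only step that needs real attention, and the only place where the hypotheses enter beyond the poset structure of Theorem \ref{2.9}, is axiom (II): one must have $\epsilon(0)\bullet(a,b,c)=\epsilon(0)$ for every $(a,b,c)\in\ad$. This is exactly the assertion that $\epsilon(0)\leq(a,b,c)$ always holds, i.e., that $\epsilon(0)$ is the least element, which is why the second paragraph is the load-bearing part of the argument. Beyond securing this bottom element, I do not expect any genuine obstacle: once Theorem \ref{2.9} delivers the poset and $\epsilon(0)$ is recognized as least, the $BCK$-algebra is produced by the poset-to-$BCK$ construction verbatim.
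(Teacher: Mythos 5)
Your proof is correct and takes essentially the same route the paper intends: Theorem \ref{2.9} supplies the poset, and the poset-to-$BCK$ construction recalled in the Preliminaries (the paragraph before Lemma \ref{1.3}) does the rest, with the least-element property of $\epsilon(0)$ being the extra fact one must check. One minor misattribution: axiom (II) actually holds automatically in this construction (since $\epsilon(0)\bullet v\in\{\epsilon(0)\}$ in either branch), and it is axiom (III) that genuinely needs $\epsilon(0)$ to be minimal (otherwise some $u<\epsilon(0)$ would give $u\bullet\epsilon(0)=\epsilon(0)=\epsilon(0)\bullet u$ with $u\neq\epsilon(0)$) --- but you do establish the needed minimality, so the argument stands.
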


\begin{corollary}\label{2.12}
If $(A;\ast,0)$ be a  $d$-transitive  $d$-algebra and $(a,b,c)=\epsilon(a),$ for any $(a,b,c)\in\ad$, then $(\ad;<)$ induces a $BCK$-algebra. 
\end{corollary}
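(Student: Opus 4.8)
The plan is to reduce the statement to Corollary~\ref{2.11}, since the two differ only in how the hypothesis on $\ad$ is phrased. First I would record that Proposition~\ref{2.6} provides the exact dictionary between the two formulations: because $A$ is $d$-transitive, for each $(a,b,c)\in\ad$ one has $d(a,b,c)=0$ if and only if $(a,b,c)=\epsilon(a)$. Quantifying this biconditional over all of $\ad$ shows that the hypothesis ``$(a,b,c)=\epsilon(a)$ for every $(a,b,c)\in\ad$'' assumed here is logically equivalent to the hypothesis ``$d(a,b,c)=0$ for every $(a,b,c)\in\ad$'' of Corollary~\ref{2.11}.

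With this equivalence in hand the conclusion is immediate: both hypotheses of Corollary~\ref{2.11} — namely the $d$-transitivity of $A$ and the vanishing of every diameter on $\ad$ — now hold, so Corollary~\ref{2.11} applies and $(\ad;<)$ induces a $BCK$-algebra.

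If one prefers a self-contained argument that does not route through the diameter condition, I would instead start from Corollary~\ref{2.10}, which under the present hypothesis already gives that $(\ad;<)$ is a poset whose unique minimal element $\epsilon(0)$ is in fact its least element. I would then invoke the standard poset-to-$BCK$ passage recalled in the Preliminaries: defining a new operation on $\ad$ that sends a pair $(X,Y)$ to $\epsilon(0)$ when $X<Y$ or $X=Y$ and to $X$ otherwise turns $(\ad;<)$ into a $BCK$-algebra with constant $\epsilon(0)$, the least element $\epsilon(0)$ being precisely what secures axioms (I)--(II) together with the $BCK$ axioms (V)--(VI).

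I do not anticipate a genuine obstacle here, as all the substantive work has already been done in Theorem~\ref{2.9} and Corollary~\ref{2.10} (posetness) and in Proposition~\ref{2.6} (the translation of hypotheses). The only point deserving care is to apply Proposition~\ref{2.6} in its universally quantified form, so that the hypotheses of Corollary~\ref{2.11} are genuinely met; this is harmless since Proposition~\ref{2.6} is stated as an ``if and only if''.
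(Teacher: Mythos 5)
Your proposal is correct and matches the paper's (implicit) intent: Corollary~\ref{2.12} is simply Corollary~\ref{2.11} with its diameter hypothesis translated via the equivalence of Proposition~\ref{2.6}, exactly as you argue. The paper gives no written proof, but the parallel placement of Corollaries~\ref{2.10}--\ref{2.12} after Theorem~\ref{2.9} makes clear that this reduction is the intended argument.
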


Consider the $d$-transitive $d$-algebra $(X;\ast,0)$ given in Example \ref{1.7}, which was not a $BCK$-algebra. Now, applying Corollary \ref{2.12}, we can see that $(X^{\bigtriangledown};<)$ induced a $BCK$-algebra.

\section*{Acknowledgment} The authors express their gratitude to Professor Hee Sik Kim for his contribution to this article and for his many valuable suggestions.
				
				\hspace{1in}

\bigskip

\footnotesize{\textsc{Hiba F. Fayoumi, Department of Mathematics and Statistics,
University of Toledo, Toledo, OH 43606-3390, U. S. A.}
\par \textit{E-mail address}: \texttt{hiba.fayoumi@UToledo.edu}}
\medskip

 \footnotesize{\textsc{ Akbar Rizae, Department of Mathematics, Payame Noor University, P.O. Box 14395-469, Tehran, Iran}
\par \textit{E-mail address}: \texttt{rezaei.pnuk@gmail.com}}
\medskip

\bigskip

\end{document}